\documentclass[12pt,reqno]{amsart}
\usepackage{cmap,mathtools}
\usepackage[T1]{fontenc}
\usepackage[utf8]{inputenc}
\usepackage[english]{babel}
\usepackage{amsfonts,amssymb,amsmath}
\usepackage{footnote}\usepackage{bigints}
\usepackage{array}
\usepackage{adjustbox}

\usepackage{graphicx}
\usepackage{caption}

 \usepackage{enumitem}

\allowdisplaybreaks

\usepackage{graphicx}
\usepackage{epstopdf}
\usepackage{a4wide}
\usepackage{subfig}
\usepackage{xcolor}
\usepackage[colorlinks=true,linktocpage,pdfpagelabels,
bookmarksnumbered,bookmarksopen]{hyperref}
\definecolor{ForestGreen}{rgb}{0.1,0.6,0.05}
\definecolor{EgyptBlue}{rgb}{0.063,0.1,0.6}
\hypersetup{
	colorlinks=true,
	linkcolor=EgyptBlue,         
	citecolor=ForestGreen,
	urlcolor=olive
}

\usepackage[hyperpageref]{backref}

\newtheorem{theorem}{Theorem}[section]
\newtheorem{proposition}{Proposition}[section]
\newtheorem{definition}{Definition}[section]
\newtheorem{lemma}{Lemma}[section]
\newtheorem{remark}{Remark}[section]
\newtheorem{cor}{Corollary}[section]

\numberwithin{equation}{section}
\numberwithin{theorem}{section}
\numberwithin{equation}{section}
\numberwithin{theorem}{section}

\usepackage{ulem}
\usepackage{xcolor}
\usepackage[colorlinks=true,linktocpage,pdfpagelabels,
bookmarksnumbered,bookmarksopen]{hyperref}
\definecolor{ForestGreen}{rgb}{0.1,0.6,0.05}
\definecolor{EgyptBlue}{rgb}{0.063,0.1,0.6}
\hypersetup{
	colorlinks=true,
	linkcolor=EgyptBlue,         
	citecolor=ForestGreen,
	urlcolor=olive
}

\subjclass[2020]{Primary 58J50; Secondary 35P15}
\usepackage[hyperpageref]{backref}
\usepackage[foot]{amsaddr}

\DeclareUnicodeCharacter{2212}{-}

\title [Steklov-Robin eigenvalues on Symmetric domains]{Sharp Bounds for higher  mixed Steklov--Robin eigenvalues on domains with holes}

\author{Sagar Basak }
\address{Department of Mathematical Sciences, Indian Institute of Technology (BHU), Varanasi, India}
\email{sagarbasak.rs.mat22@itbhu.ac.in}

\keywords{Steklov eigenvalues, Steklov--Robin Eigenvalues, doubly connected domains, Symmetries}
\DeclareUnicodeCharacter{2212}{-}
\begin{document}
\begin{abstract}
This article is concerned with mixed Steklov--Robin eigenvalues on bounded domains in $\mathbb{R}^{n}, n \geq 2$, with Lipschitz boundary. Specifically, we consider domains with symmetry of order $4$ containing a spherical hole. We obtain isoperimetric inequalities for the $k$-th Steklov-Robin eigenvalues for each $k \in \{2, 3, \dots, n+1\}$.  We provide examples to emphasize the fact that the symmetry assumptions, on the family of domains considered, are crucial.
\end{abstract}

\maketitle

\section{introduction} 
 Let  $\Omega_{out}$ 
denote a bounded domain in the $\mathbb{R}^n$ 
with Lipschitz  boundary denoted by $\partial\Omega_{out}$. Let $B_{r} \subset  M$ be a ball of radius ${r}$ such that $\overline{B_{r}} \subset \Omega_{out}$. Consider the following mixed Steklov-Robin eigenvalue problem on $\Omega := \Omega_{out} \setminus \overline{B_{r}}.$
\begin{align} \label{eqn: SD problem}
\begin{cases}
        \Delta u=0 & \text{in} \,\, \Omega,\\
        \frac{\partial u}{\partial \nu}+\beta(x)u=0 & \text{on}\,\, \partial B_{r},\\
        \frac{\partial u}{\partial \nu }=\sigma u & \text{on} \,\, \partial \Omega_{out}. 
    \end{cases}    
\end{align}
Here, $\beta(x) \in L^{\infty}(\partial B_r)$ is a positive function satisfying $\|\beta\|_{L^1(\partial B_r)} = m > 0$, and $\nu$ denotes the outward unit normal vector to $\partial\Omega$.
The mixed Steklov--Robin spectrum coincides with the spectrum of the corresponding Dirichlet to Neumann type operator. More precisely, this operator maps each function $f\in L^{2}(\partial\Omega_{out})$ to the normal derivative $\frac{\partial \tilde{f}}{\partial \nu}\in L^{2}(\partial\Omega_{out})$ of its harmonic extension $\tilde{f}$ into $\Omega$, where $\tilde{f}$ satisfies the Robin boundary condition on $\partial B_r$. Since this Dirichlet to Neumann type operator is self-adjoint and positive, it admits a discrete sequence of real positive eigenvalues,
\begin{align*}
    0<\sigma_1(\Omega)\leq\sigma_2(\Omega)\leq \sigma_3(\Omega) \leq \cdots 
    \longrightarrow \infty ,
\end{align*}
each repeated according to its multiplicity.

For each $k\geq 1,$  mixed Steklov-Robin eigenvalues $\sigma_k(\Omega)$ are determined by the following variational characterization: 
\begin{align} \label{variational}
   \sigma_k(\Omega)
= \min_{\substack{E \subset H^1(\Omega) \\ \dim(E)=k}}
   \; \max_{\substack{u \in E \\ u \ne 0}}
   \frac{\displaystyle\int_{\Omega} \|\nabla u\|^2\,dV +\int_{\partial B_r} \beta(x) u^2 dS}
        {\displaystyle\int_{\partial\Omega_{out}} u^2\,dS }.
\end{align}

\subsection{State of the Art}
 The Laplace eigenvalues on doubly connected domains have been extensively studied under various boundary conditions. To the best of our knowledge, P{\'o}lya~\cite{polya1960two} initiated the study of eigenvalue problem by deriving upper bounds for the first Dirichlet eigenvalue of planar ring-shaped domains. In recent years, significant progress has been made in obtaining eigenvalue bounds for mixed boundary value problems, including the mixed Dirichlet--Neumann problem~\cite{anoop2020reverse, anoop2021shape}, the mixed Steklov--Dirichlet problem~\cite{gavitone2024monotonicity, hong2020shape, GloriaPaoli2021C, sannipoli2025estimates}, the mixed Steklov--Neumann problem~\cite{basak2026sharp, arias2024applications, banuelos2010eigenvalue, hassannezhad2020eigenvalue}, and the mixed Robin--Neumann problem~\cite{CITO2026114086, cito2025stability, paoli2020sharp}.
Here, we focus on mixed Steklov-type problems on doubly connected bounded domain, for which several important results are known.
In~\cite{verma2020eigenvalue}, the authors proved that concentric annuli maximizes the first eigenvalue of the Steklov--Dirichlet problem among all annular domains of fixed volume. Using the spherical coordinate system, Ftouhi~\cite{ftouhi2022place} provided an alternative proof and extended the result to the classical Steklov problem.  Eigenvalue estimates for doubly connected star-shaped domains have also been explored in various geometric settings. {In~\cite{gavitone2023isoperimetric}, the authors derived upper bounds for the first Steklov--Dirichlet eigenvalue on doubly connected star-shaped domains and established the concentric annulus as the extremal domain in $\mathbb{R}^n$.} These results were later generalized to non-Euclidean space forms in~\cite{basak2023sharp}. Employing similar techniques, the authors in~\cite{gavitone2023steklov} established lower bounds for the first eigenvalue of the Steklov--Robin problem~\eqref{eqn: SD problem}.  


While these works primarily concern the first {nonzero} eigenvalue, more recent studies have explored isoperimetric bounds for higher Steklov-type eigenvalues on perforated domains.
In~\cite{basak2023sharp}, isoperimetric bounds for higher Steklov--Dirichlet eigenvalues were obtained. In particular, the authors showed that, among all doubly connected domains with symmetry of order $4$ and fixed volume, the corresponding concentric annulus provides an upper bound for the higher Steklov--Dirichlet eigenvalues. This results were later extended to the mixed Steklov--Neumann and Steklov eigenvalue problems under the same assumptions \cite{basak2026bounds}, where, in the Steklov–Neumann case, the inner ball can be replaced by any bounded domain having symmetry of order $4$. To best of our knowledge, the corresponding problem for the mixed Steklov--Robin eigenvalue problem remains open. Motivated by these developments, in the present article, we investigate isoperimetric upper bounds for higher mixed Steklov--Robin eigenvalues. Before proceeding, we introduce a symmetry notion that will be used to state the main result. 

\begin{definition}
    A domain  $W\subset \mathbb{R}^n$ is said to be symmetric of order $s$ with respect to the origin, if $R_{i,j}^{\frac{2\pi}{s}}(W)=W$ for all $1\leq i <j \leq n,$ where $R_{i,j}^{\frac{2\pi}{s}}$ denotes the anticlockwise rotation with respect to the origin by angle $\frac{2\pi}{s}$ in the coordinate plane $(x_i, x_j).$
\end{definition}

 Let $B_{R_1}$ be a ball in $\mathbb{R}^n$ of radius $R_1$ centered at the origin. Let $\Omega_{out} \subset \mathbb{R}^n$ be a bounded domain with Lipschitz boundary, having symmetry of order 4 centered at the origin such that $ \overline{ B_{R_1} }\subset \Omega_{out}$. Consider the Steklov--Robin eigenvalue problem \eqref{eqn: SD problem} on $\Omega=\Omega_\mathrm{out}\setminus \overline{B_{R_1}}.$  Under this setting, the following theorem holds.
\begin{theorem} \label{thm:isoperimetric}
    Let $\Omega$ be a bounded domain with Lipschitz boundary defined as above, and $\sigma_{k}$ be the $k$-th nonzero eigenvalue of \eqref{eqn: SD problem} on $\Omega$. Then for ${2} \leq k \leq n+1$, 
    \begin{equation}
        \sigma_{k}(\Omega) \leq \sigma_{k}(\Omega_0) = \sigma_2 (\Omega_0), 
    \end{equation} 
    where $\Omega_0 = B_{R_2} \backslash \overline{ B_{R_1}}$ and $B_{R_2}$ is a ball of radius $R_2$ centered at the origin such that $Vol(B_{R_2})= Vol(\Omega_{out})$ i.e, $Vol(\Omega)= Vol(\Omega_0)$.
    \end{theorem}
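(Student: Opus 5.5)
The plan is the standard "radial profile times coordinate function" test-function argument, adapted to the Robin inner condition. I will assume, as the statement implicitly requires for $\sigma_k(\Omega_0)$ to be comparable, that $\beta$ is compatible with the order-$4$ symmetry. For instance, $\beta$ could be constant $=m/|\partial B_{R_1}|$, or more generally invariant under the rotations $R^{\pi/2}_{i,j}$; this is the point I would double-check first.

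\textbf{Step 1: the annulus and the test functions.} First compute the spectrum on $\Omega_0$ by separation of variables. The eigenfunctions for $\sigma_2(\Omega_0)=\dots=\sigma_{n+1}(\Omega_0)$ should be $u_i=g(r)\,x_i/r$, $1\le i\le n$. Here $g(r)=ar+br^{1-n}$ solves $g''+\frac{n-1}{r}g'-\frac{n-1}{r^2}g=0$, together with the Robin condition $-g'(R_1)+\bar\beta g(R_1)=0$ at $r=R_1$. We then have $\sigma_2(\Omega_0)=g'(R_2)/g(R_2)$. We must also check that the radial mode gives $\sigma_1(\Omega_0)$ and that these $n$ modes are the next ones; this gives $\sigma_k(\Omega_0)=\sigma_2(\Omega_0)$ for $2\le k\le n+1$. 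Extend $g$ to $[R_1,\infty)$ by the same formula (or by a constant beyond $R_2$ if needed for monotonicity) and use the same $u_i$ as test functions on $\Omega$.

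\textbf{Step 2: building a $k$-dimensional space.} Let $\varphi_1$ be the first eigenfunction on $\Omega$. It is simple and positive, so it is invariant under every $R^{\pi/2}_{i,j}$ and hence even in each $x_i$. Each $u_i$ is odd under $x_i\mapsto -x_i$, which is a composition of symmetries of $\Omega$. Therefore $\int_{\partial\Omega_{out}}\varphi_1u_i=0$, and also the bilinear form vanishes: $\int_\Omega\nabla\varphi_1\cdot\nabla u_i+\int_{\partial B_{R_1}}\beta\varphi_1u_i=\sigma_1\int_{\partial\Omega_{out}}\varphi_1u_i=0$. Take $E=\mathrm{span}\{\varphi_1,u_1,\dots,u_{k-1}\}$ in \eqref{variational}. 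Its Rayleigh quotient is then the maximum of $\sigma_1(\Omega)$ and the quotient over $\mathrm{span}\{u_i\}$. The same order-$4$ symmetry (rotations by $\pi$ in $(x_i,x_j)$, and by $\pi/2$ swapping $x_i,x_j$) kills all cross terms $i\ne j$ and makes the numerators $N_i$ and denominators $D_i$ independent of $i$. Hence the quotient of any $\sum c_iu_i$ equals
\[
\frac{\sum_i N_i}{\sum_i D_i}=\frac{\int_\Omega\big(g'^2+\tfrac{n-1}{r^2}g^2\big)\,dV+m\,g(R_1)^2}{\int_{\partial\Omega_{out}}g^2\,dS},
\]
using $\sum_i x_i^2/r^2=1$. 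It remains to show that this quantity is at most $\sigma_2(\Omega_0)$, and that $\sigma_1(\Omega)\le\sigma_2(\Omega_0)$. The latter follows from the same comparison applied to the radial test function, or simply from $\sigma_1\le$ the quotient above.

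\textbf{Step 3: comparison with $\Omega_0$ (main obstacle).} For the numerator, I would show that $F(r)=g'^2+\frac{n-1}{r^2}g^2$ is nonincreasing on $[R_1,\infty)$. Since $|\Omega|=|\Omega_0|$, mass transplantation ($\int_{\Omega\setminus\Omega_0}F\le\int_{\Omega_0\setminus\Omega}F$) then gives $\int_\Omega F\le\int_{\Omega_0}F$. For the denominator, use $x/r\cdot\nu\le1$ and the divergence theorem:
\[
\int_{\partial\Omega_{out}}g^2\,dS\ \ge\int_{\Omega}\mathrm{div}\Big(g^2\tfrac{x}{r}\Big)dV+\int_{\partial B_{R_1}}g^2\,dS .
\]
Here $\mathrm{div}(g^2x/r)=2gg'+\frac{n-1}{r}g^2$. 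If this is nondecreasing in $r$, mass transplantation in the other direction gives $\int_{\partial\Omega_{out}}g^2\ge\int_{\partial B_{R_2}}g^2$. Combining both estimates, the quotient is at most the corresponding quotient on $\Omega_0$, which equals $\sigma_2(\Omega_0)$.

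The hard part is verifying these two monotonicity properties for the explicit $g=ar+br^{1-n}$ determined by the Robin condition. The sign of $b$ depends on $\bar\beta$ relative to $1/R_1$, and I expect a case split there. If $F$ fails to be monotone beyond $R_2$, I would first try modifying $g$ to be constant $g(R_2)$ for $r\ge R_2$. With that modification $F=(n-1)g(R_2)^2/r^2$ is decreasing and the divergence term is still increasing in the needed range.
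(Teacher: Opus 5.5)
This is essentially the paper's proof. It uses the same test functions $g(r)x_i/r$ (the paper's $f_1(r)x_i/r$) and the same functions $F=f_1'^2+\frac{n-1}{r^2}f_1^2$ and $G=2f_1f_1'+\frac{n-1}{r}f_1^2$ from Lemma~\ref{increasing decreasin lmm}. The comparisons are also the same: rearrangement for the volume term (Lemma~\ref{lem:integral2}) and the divergence theorem for the boundary term (Lemma~\ref{integral6}), with the Robin term identical on both sides.

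The one real difference is your extra test function. You use $\varphi_1$, whereas the paper uses the radial $f_1$ itself and kills the mixed terms with $\int h(r)x_i=0$. Both work; yours additionally needs simplicity of $\varphi_1$ and its invariance under the symmetry group, which holds under your assumption on $\beta$.

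Three corrections:
\begin{itemize}
\item The reflection $x_i\mapsto -x_i$ is not a composition of the rotations $R^{\pi/2}_{i,j}$, since these all have determinant $1$. So $\varphi_1$ need not be even in $x_i$. Use $R^{\pi}_{i,j}$ instead: it fixes $\varphi_1$ and maps $u_i\mapsto -u_i$, which gives $\int_{\partial\Omega_{out}}\varphi_1u_i=0$.
\item The monotonicity you leave open needs no case split. With $g=r+br^{1-n}$ one has $F=n+n(n-1)b^2r^{-2n}$ and $G'(r)=(n+1)-2(n-1)t+(n-1)(2n-1)t^2$, where $t=br^{-n}$. This quadratic in $t$ has discriminant $-8n^2(n-1)<0$, so $G'>0$ for every sign of $b$.
\item Drop the fallback of freezing $g$ beyond $R_2$. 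There $\mathrm{div}(g^2x/r)=(n-1)g(R_2)^2/r$ is decreasing, and it jumps down at $R_2$ because $g(R_2)g'(R_2)>0$. That would destroy the denominator comparison.
\end{itemize}
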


This article is organized as follows. In Section \ref{Pre}, we derive the eigenvalues and corresponding eigenfunctions on the annular domain and establish several integral inequalities. Section \ref{main} is devoted to the proof of Theorem \ref{thm:isoperimetric}. Finally, in Section \ref{counter}, we present examples of domains demonstrating that the symmetry assumption in Theorem \ref{thm:isoperimetric} cannot be dropped.



\section{Preliminaries}\label{Pre}
\subsection{The mixed  Steklov-Robin eigenvalues on annular domains.}

Let $\Omega_0 = B_{R_2} \backslash \overline{B_{R_1}}$, where $B_{R_1}$ and $B_{R_2}$ are concentric balls in $\mathbb{R}^{n}$ of radius $R_1$ and $R_2$ respectively, $0 < R_1 < R_2$. Without loss of generality, we assume that both balls $B_{R_1}$ and $B_{R_2}$ are centered at the origin. Assume that $\beta(x)=\beta>0$ is a positive constant on $\partial B_r$. Now, we find eigenvalues and corresponding eigenfunctions of the following Steklov-Robin eigenvalue problem on domain $\Omega_0$ 
and study some of their properties. 
\begin{equation}
    \begin{cases}
        \Delta u=0 & \text{in} \,\, \Omega_0=B_{R_2} \backslash \overline{B_{R_1}} ,\\
        \frac{\partial u}{\partial \nu}+\beta u=0 & \text{on}\,\, \partial B_{R_1},\\
        \frac{\partial u}{\partial \nu }=\sigma u & \text{on} \,\, \partial B_{R_2.}  \label{eq:Steklov-Robin(ball)}
    \end{cases}
\end{equation}
Let $u(r,\omega)=f(r)g(\omega)$ be a smooth function, where $f$ is a radial function defined on $[R_1, R_2]$ and $g$ is an eigenfunction of $\Delta_{S^{n-1}}$ corresponding to the eigenvalue 
$l(l+n-2)$. 
Now,
\begin{align*}
     \Delta u(r, \omega) &= g(\omega)\left(-f''(r)- \frac{n-1}{r} f'(r) \right) + \frac{f(r)}{r^2} \Delta_{
     S
     ^{n-1}} g(\omega)\\
     &= g(\omega) \left(-f''(r)- \frac{n-1}{r} f'(r) + \frac{f(r)}{r^2} l(l+n-2) \right).
\end{align*}
If $u$ is a solution of (\ref{eq:Steklov-Robin(ball)}), then $f$ satisfies the following ordinary differential equation and boundary constraints.
\begin{eqnarray} \label{ODE for f}
\begin{cases}
    \begin{array}{ll}
        -f''(r)-\frac{n-1}{r}f'(r)+ \frac{l(l+n-2)}{r^2}f(r)=0 ~\mbox{ for } ~ r \in (R_1,R_2), \\
    -f'(R_1)+\beta f(R_1)=0, \,\, f'(R_2)=\sigma f(R_2).
    \end{array}
    \end{cases}
\end{eqnarray}
For $l=0,$ the following first eigenvalue and corresponding eigenfunction of \eqref{eq:Steklov-Robin(ball)} are obtained in \cite{gavitone2023steklov}.
\begin{align*}
    \sigma_{1}(\Omega_0)=\begin{cases}
    \frac{1}{\frac{R_2}{\beta R_1}+R_2 \log(\frac{R_2}{R_1})}   \quad n=2,\\
        \frac{\beta R_2^{1-n}}{\beta R_2^{2-n}-\beta R_1^{2-n}-(n-2)R_1^{1-n}} \quad n\geq 3.
    \end{cases}
\end{align*}

\begin{align}\label{f_0 eigenfunction}
    f_0(r)=\begin{cases}
        \log(\frac{r}{R_1})+\frac{1}{\beta R_1}  \quad n=2,\\
        \frac{1}{R_1^{n-2}}-\frac{1}{r^{n-2}}+\frac{n-2}{\beta}\frac{1}{R_2R_1^{n-2}} \quad n\ge 3.
    \end{cases}
\end{align}
Now we compute eigenvalues and eigenfunctions for $l\geq 1.$
Solving by ordinary differential equation \eqref{ODE for f}
we get 
\begin{align}\label{solu ODE}
    f_l(r)=C_1r^l+C_2r^{-l-n+2}
\end{align}
Applying the boundary conditions, we obtain the following system of equations for the unknowns $C_1$ and $C_2$.
\begin{align} \label{System of linear}
    \begin{cases}
        C_1(-R_1^{l-1}l+\beta R_1^l)+C_2(\beta R_1^{-l-n-2}-(-l-n+2)R_1^{-l-n+1})=0\\
        C_1(-lR_2^{l-1}+R_2^l)+C_2(\sigma R_2^{-l-n+2}-(-l-n+2)R_2^{-l-n+1})=0.
    \end{cases}
\end{align}
Since the system is homogeneous, it admits a nontrivial solution if and only if
\begin{align*}
\det
\begin{pmatrix}
-R_1^{l-1}l+\beta R_1^l & \beta R_1^{-l-n+2}-(-l-n+2)R_1^{-l-n+1}\\
-lR_2^{l-1}+\sigma R_2^l & \sigma R_2^{-l-n+2}-(-l-n+2)R_2^{-l-n+1}
\end{pmatrix}
=0.
\end{align*}
From this, we get
 \begin{align*}
     \sigma_{(l)}= \frac{l R_1^{-l-n+1}R_2^{l-1}(\beta R_1+l+n-2)-R_1^{l-1}R_2^{1-n-l}(\beta R_1-l)(l+n-2)}{R_1^{l-1} R_2^{2-l-n}(\beta R_1 -l)-R_1^{-l-n-1}R_2^l(\beta R_1 +l+n-2)}, \quad l\geq 1.
 \end{align*} 
For a given value of $\sigma,$ constants $ C_1$ and $C_2$ are linearly dependent. Choosing $C_1=1,$ the first equation in \eqref{System of linear} yields
\begin{align*}
    C_2=-\frac{-lR_1^{l-1}+\beta R_1^l}{\beta R_1^{-l-n+2}-(-l-n+2)R_1^{-l-n+2}}.
\end{align*}
 Hence, inserting $C_1$ and $C_2$ in \eqref{solu ODE}, we have 
 
 \begin{align} \label{eigenfunction}
     f_l(r)=r^l-\frac{-lR_1^{l-1}+\beta R_1^l}{\beta R_1^{2-l-n}-(2-l-n)R_1^{1-l-n}}r^{2-l-n}, \quad l\geq 1.
 \end{align}
 Here, $\sigma_{2}(\Omega_0)=\sigma_{3}(\Omega_0)=\cdots=\sigma_{n+1}(\Omega_0)=\sigma_{(1)}$ denotes the second  Steklov--Robin eigenvalue on $\Omega_0$, counted without multiplicity. The corresponding eigenfunctions are of the form
$
f_1(r)\frac{x_i}{r},$ where $(x_1,x_2,\ldots,x_n)$ are the Cartesian coordinates in $\mathbb{R}^n$.  

 \begin{lemma} \label{increasing decreasin lmm}
     Let $f_1:[R_1, \infty ) \to \mathbb{R}$ be the function defined as in \eqref{eigenfunction} for $l=1.$ Define $F,G:[R_1, \infty) \to \mathbb{R}$ as 
     \begin{align*}
         F(r)=(f_1'(r))^2+\frac{n-1}{r^2}f_1^2(r) \quad \textit{and} \quad G(r)=2f_1(r)f_1'(r)+\frac{n-1}{r}f_1^2(r).
     \end{align*}
     Then $F$ is a nonnegative decreasing function of $r$ and $G$ is a nonnegative increasing function of r.
 \end{lemma}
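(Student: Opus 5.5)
The plan is to make $f_1$ explicit and reduce both claims to elementary one-variable inequalities in the quantity $t=a r^{-n}$. Setting $l=1$ in \eqref{eigenfunction} (the denominator there should read $\beta R_1^{1-n}+(n-1)R_1^{-n}$) gives
\begin{align*}
f_1(r)=r-a\,r^{1-n},\qquad a=\frac{(\beta R_1-1)R_1^{n}}{\beta R_1+n-1}.
\end{align*}
Hence, with $t=t(r)=a r^{-n}$,
\begin{align*}
f_1'(r)=1+(n-1)t,\qquad \frac{f_1(r)}{r}=1-t.
\end{align*}

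\emph{Step 1: bounds on $t$.} For $r\ge R_1$ the value $t$ lies between $0$ and $t_0=aR_1^{-n}=\frac{\beta R_1-1}{\beta R_1+n-1}$. If $\beta R_1\ge 1$, then $0\le t_0<1$. If $\beta R_1<1$, then $-\frac{1}{n-1}<t_0<0$. In either case, for all $r\ge R_1$,
\begin{align*}
-\tfrac{1}{n-1}<t<1 .
\end{align*}
This is where the positivity of $\beta$ is used.

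\emph{Step 2: the function $F$.} Expanding directly,
\begin{align*}
F=(1+(n-1)t)^2+(n-1)(1-t)^2=n+n(n-1)t^2=n+n(n-1)a^2r^{-2n}.
\end{align*}
This is manifestly positive. It is also nonincreasing in $r$, and strictly decreasing unless $a=0$, i.e.\ $\beta R_1=1$, in which case it is constant.

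\emph{Step 3: nonnegativity of $G$.} Factoring out $r$,
\begin{align*}
G=r\,\frac{f_1}{r}\Big(2f_1'+(n-1)\frac{f_1}{r}\Big)=r(1-t)\big((n+1)+(n-1)t\big).
\end{align*}
By Step 1 we have $1-t>0$. Also $(n+1)+(n-1)t>(n+1)-1>0$. Hence $G>0$ on $[R_1,\infty)$.

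\emph{Step 4: monotonicity of $G$.} Expanding,
\begin{align*}
G=(n+1)r-2a r^{1-n}-(n-1)a^2r^{1-2n}.
\end{align*}
Differentiating and writing the result again in terms of $t$,
\begin{align*}
G'(r)=(n+1)+2(n-1)t+(n-1)(2n-1)t^2 .
\end{align*}
This is a quadratic in $t$ with positive leading coefficient (or a positive constant if $n=1$, which is excluded). Its discriminant is
\begin{align*}
4(n-1)^2-4(n+1)(n-1)(2n-1)=4(n-1)\big[(n-1)-(n+1)(2n-1)\big]<0 .
\end{align*}
So $G'>0$ for every real $t$, and $G$ is strictly increasing.

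I expect no serious obstacle. The only delicate points are:
\begin{itemize}
\item correctly simplifying the constant $C_2$ from \eqref{eigenfunction}, which has a typo in the exponent;
\item the sign case $\beta R_1<1$, where $a<0$ and the nonnegativity of $G$ requires the lower bound $t>-\frac{1}{n-1}$ from Step 1 rather than just $t\ge 0$.
\end{itemize}
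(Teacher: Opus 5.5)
Your proof is correct and follows the same route as the paper: make $f_1(r)=r-a\,r^{1-n}$ explicit, write $F=n+n(n-1)a^2r^{-2n}$ and $G=(n+1)r-2a\,r^{1-n}-(n-1)a^2r^{1-2n}$ in closed form, and differentiate. The differences are only in how the signs are checked. You get $G>0$ pointwise from the factorization $G=r(1-t)\big((n+1)+(n-1)t\big)$ together with $-\tfrac{1}{n-1}<t<1$, whereas the paper deduces $G\ge 0$ from $G(R_1)\ge 0$ plus monotonicity; and you verify $G'>0$ with a discriminant where the paper uses a sum of squares. Incidentally, your $t^2$-coefficient $(n-1)(2n-1)$ in $G'$ is the correct one: the paper's displayed $G'$ effectively has $3(n-1)$, and its $G(R_1)$ drops a factor $2$, but neither slip changes the sign conclusions.
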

 \begin{proof}
     Recall that $f_1(r)= r+ \frac{R_1^n(1-\beta R_1)}{\beta R_1 +n-1}\frac{1}{r^{n-1}},$  then $f_1'(r)= 1-\frac{R_1^n(1-\beta R_1)}{\beta R_1 +n-1}\frac{n-1}{r^{n}}$. Substituting these values, we get 
     \begin{align*}
         &F(r)=n+ \frac{R_1^{2n}(1-\beta R_1)^2(n^2-n)}{(\beta R_1+n-1)^2}\frac{1}{r^{2n}} ,\quad \textit{and}\\
         &G(r)=(n+1)r+2 \frac{R_1^n(1-\beta R_1)}{(\beta R_1 +n-1)}\frac{1}{r^{n-1}}- \frac{R_1^{2n}(1-\beta R_1)^2(n-1)}{(\beta R_1 +n-1)^2}\frac{1}{r^{2n-1}}.
     \end{align*}
     By differentiating these functions, we have for all $r\in [R_1, \infty),$
     \begin{align*}
         &F'(r)= - \frac{R_1^{2n}(1-\beta R_1)^2(n^2-n)}{(\beta R_1+n-1)^2}\frac{2n}{r^{2n-1}}\leq 0, \quad\textit{and} \\
        & G'(r)=2+(n-1)\left(\frac{R_1^{n}(1-\beta R_1)}{(\beta R_1+n-1)}\frac{1}{r^{n,}}-1\right)^2 + \frac{2R_1^{2n}(1-\beta R_1)^2(n-1)}{(\beta R_1+n-1)^2}\frac{1}{r^{2n,}}\geq 0.
     \end{align*}
     Therefore, $F(r)$ is a decreasing function and $G(r)$ is an increasing function on $[R_1, \infty).$ 

     Further, from the expression of $F(r),$ it is clear that $F(r)\geq 0$ for all $r\geq R_1.$ We now prove that $G(r)\geq 0$ on interval $[R_1, \infty).$
     We compute 
     \begin{align*}
         G(R_1)&= (n+1)R_1 +\frac{R_1(1-\beta R_1)}{(\beta R_1 +n-1)}- \frac{R_1(1-\beta R_1)^2(n-1)}{(\beta R_1 +n-1)^2}\\
         &= (n+1)R_1 + \frac{R_1(1-\beta R_1)}{(\beta R_1 +n-1)^2}\left( (n+1)\beta R_1 +(n-1) \right)\\
         &=\frac{R_1}{(\beta R_1 +n-1)^2} (\beta R_1(n^2+1)+(n-1)^2(n+1)+(n-1))\geq 0.
     \end{align*}
     Since $G(r)$ is increasing on $[R_1, \infty),$ it follows that $G(r)\geq G(R_1)\geq0,$ on $[R_1, \infty).$
 \end{proof}

\subsection{Some integral inequalities}
 In this subsection, we establish several integral inequalities that will be useful in the proof of our main results.
 \begin{lemma} \label{lem:integral2}
    Let $F$ be a decreasing radial function defined as in Lemma \ref{increasing decreasin lmm} and $\Omega, \Omega_0$ be defined as above. Then the following inequality holds.
    \begin{equation}
      \int_\Omega F(r) dV \leq \int_{\Omega_0} F(r) dV.
    \end{equation}
\end{lemma}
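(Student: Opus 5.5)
The plan is the standard rearrangement (bathtub-type) argument, comparing the two domains region by region. Since $\Omega = \Omega_{out}\setminus \overline{B_{R_1}}$ and $\Omega_0 = B_{R_2}\setminus\overline{B_{R_1}}$ with $\overline{B_{R_1}}\subset \Omega_{out}$, the volume normalization $Vol(\Omega_{out}) = Vol(B_{R_2})$ gives $Vol(\Omega)=Vol(\Omega_0)$. Removing the common part $\Omega\cap\Omega_0$ from both sides, we get
\begin{equation*}
Vol(\Omega\setminus \Omega_0) = Vol(\Omega_0\setminus\Omega).
\end{equation*}
Splitting each integral over the common part and the difference set, the claim reduces to
\begin{equation*}
\int_{\Omega\setminus\Omega_0} F(r)\,dV \leq \int_{\Omega_0\setminus \Omega} F(r)\,dV .
\end{equation*}

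Next we locate the two difference sets. Every point of $\Omega\setminus\Omega_0$ lies outside $B_{R_2}$ (it lies outside $B_{R_1}$ because it is in $\Omega$), so $r\geq R_2$ there. Every point of $\Omega_0\setminus\Omega$ lies in $B_{R_2}\setminus \overline{B_{R_1}}$, so $r\leq R_2$ there. Lemma~\ref{increasing decreasin lmm} says $F$ is decreasing on $[R_1,\infty)$. Hence $F(r)\le F(R_2)$ on $\Omega\setminus\Omega_0$ and $F(r)\ge F(R_2)$ on $\Omega_0\setminus\Omega$. This gives the chain
\begin{equation*}
\int_{\Omega\setminus\Omega_0} F\,dV \le F(R_2)\,Vol(\Omega\setminus\Omega_0)=F(R_2)\,Vol(\Omega_0\setminus\Omega)\le \int_{\Omega_0\setminus\Omega}F\,dV,
\end{equation*}
which is the desired inequality.

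There is no real obstacle here. The only points needing care are that $F$ is defined (and decreasing) on all of $[R_1,\infty)$, which covers points of $\Omega$ with $r>R_2$, and that all the integrals are finite. Finiteness holds because $F$ is continuous on $[R_1,\infty)$ and the domains are bounded and stay away from the origin. Nonnegativity of $F$ is not needed for this step. The symmetry of $\Omega_{out}$ is also not used here; it enters only later, when this inequality is combined with the test functions $f_1(r)x_i/r$.
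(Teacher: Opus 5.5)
Your proof is correct, but it takes a different route from the paper's.

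The paper extends $F$ to $\widetilde F$ on all of $\mathbb{R}^n$ by setting $\widetilde F\equiv F(R_1)$ on $B_{R_1}$. It then applies the Hardy--Littlewood rearrangement inequality to $\widetilde F\,\chi_{\Omega_{out}}$, using $\widetilde F^*=\widetilde F$ and $\chi_{\Omega_{out}}^*=\chi_{B_{R_2}}$. Finally it subtracts the common contribution $F(R_1)|B_{R_1}|$ from both sides.

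You instead cancel the common region $\Omega\cap\Omega_0$ and compare the two difference sets directly. They have equal volume, and they lie on opposite sides of the sphere $\{r=R_2\}$. This is the bathtub principle behind Hardy--Littlewood for an indicator function, made explicit.

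What your version buys is that it is self-contained and needs only monotonicity of $F$ on $[R_1,\infty)$ plus local integrability. It uses no extension into the hole and no nonnegativity of $F$. It also never touches the hypotheses of the rearrangement inequality. With the usual definition, $f^*$ is defined only for functions vanishing at infinity, and $\widetilde F$ does not vanish at infinity, since $F(r)\to n$ as $r\to\infty$. The paper's step $\widetilde F^*=\widetilde F$ is easily repaired by truncating $\widetilde F$ outside a large ball containing $\Omega_{out}$, but your argument avoids the issue altogether.

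The paper's version is shorter once rearrangement theory is available, and it is the form that extends to non-radial weights via $g^*$. For the radial weight here, neither proof uses the order-$4$ symmetry, as you correctly note.
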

\begin{proof}
Let $F:\mathbb{R}^{n}\setminus B_{R_1}\to\mathbb{R}$ be a nonnegative decreasing radial function. Define the extension
\[
\widetilde{F}:\mathbb{R}^{n}\to\mathbb{R}
\]
by
\[
\widetilde{F}(x)=
\begin{cases}
F(R_1), & x\in B_{R_1},\\[1mm]
F(|x|), & x\in \mathbb{R}^{n}\setminus B_{R_1}.
\end{cases}
\]
Then $\widetilde{F}$ is a nonnegative decreasing radial function on $\mathbb{R}^{n}$. Moreover,
\[
\int_{\Omega}\!F(|x|)\,dx
=
\int_{\Omega_{\mathrm{out}}}\!\widetilde{F}(x)\,dx
-
F(R_1)|B_{R_1}|.
\]
Since
\[
\int_{\Omega_{\mathrm{out}}}\widetilde{F}(x)\,dx
=
\int_{\mathbb{R}^{n}}
\widetilde{F}(x)\chi_{\Omega_{\mathrm{out}}}(x)\,dx,
\]
the Hardy--Littlewood rearrangement inequality yields
\[
\int_{\mathbb{R}^{n}}
\widetilde{F}(x)\chi_{\Omega_{\mathrm{out}}}(x)\,dx
\le
\int_{\mathbb{R}^{n}}
\widetilde{F}^{*}(x)\chi_{\Omega_{\mathrm{out}}}^{*}(x)\,dx.
\]
Since $\widetilde{F}$ is a nonnegative radial decreasing function, it coincides with its symmetric decreasing rearrangement, namely
\[
\widetilde{F}^{*}=\widetilde{F}.
\]
Furthermore,
\[
\chi_{\Omega_{\mathrm{out}}}^{*}
=
\chi_{\Omega_{\mathrm{out}}^{*}}
=
\chi_{B_{R_2}},
\]
where $B_{R_2}$ is the ball satisfying
\[
|B_{R_2}|=|\Omega_{\mathrm{out}}|.
\]
Hence,
\[
\int_{\mathbb{R}^{n}}
\widetilde{F}^{*}(x)\chi_{\Omega_{\mathrm{out}}}^{*}(x)\,dx
=
\int_{B_{R_2}}\widetilde{F}(x)\,dx.
\]
Combining the above identities, we obtain
\[
\int_{\Omega}F(|x|)\,dx
\le
\int_{B_{R_2}}\widetilde{F}(x)\,dx
-
F(R_1)|B_{R_1}|.
\]
Finally, by the definition of $\widetilde{F}$,
\[
\int_{B_{R_2}}\widetilde{F}(x)\,dx
-
F(R_1)|B_{R_1}|
=
\int_{B_{R_2}\setminus B_{R_1}}F(|x|)\,dx.
\]
Therefore,
\[
\int_{\Omega}F(|x|)\,dx
\le
\int_{B_{R_2}\setminus B_{R_1}}F(|x|)\,dx,
\]
which completes the proof.
\end{proof}

\begin{lemma}\label{integral6}
   Let $f_{1}(r), \Omega_{out}$ and $B_{R_2}$ be defined as above, and $\partial \Omega_{out}, \partial B_{R_2}$ denote boundaries of $\Omega_{out}, B_{R_2},$ respectively. Then the following inequality holds.
   \begin{equation} \label{inequality 5}
      \int_{\partial {\Omega_{out}}} f_{1}^2(r) dS \geq  \int_{\partial {B_{R_2}}} f_{1}^2(r) dS.
   \end{equation}
\end{lemma}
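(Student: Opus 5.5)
The plan is to use the divergence theorem on $\Omega_{out}\setminus \overline{B_{R_2}}$ and $B_{R_2}\setminus\overline{\Omega_{out}}$, or equivalently on $\Omega_{out}$ and $B_{R_2}$ after extending $f_1^2$ suitably inside $B_{R_1}$, with the radial vector field $X(x)=f_1^2(r)\,\frac{x}{r}$. Its divergence is
\begin{equation*}
\operatorname{div} X = 2f_1(r)f_1'(r)+\frac{n-1}{r}f_1^2(r)=G(r),
\end{equation*}
which is exactly the function $G$ of Lemma \ref{increasing decreasin lmm}; there it is shown to be nonnegative and increasing on $[R_1,\infty)$. On $\partial B_{R_2}$ we have $X\cdot\nu=f_1^2(R_2)$. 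On $\partial\Omega_{out}$ we have $X\cdot\nu = f_1^2(r)\,\frac{x}{r}\cdot\nu\le f_1^2(r)$. Hence
\begin{equation*}
\int_{\partial\Omega_{out}} f_1^2\,dS \ \ge\ \int_{\partial\Omega_{out}} X\cdot\nu\,dS=\int_{\Omega_{out}\setminus \overline{B_{R_1}}} G\,dV+\int_{\partial B_{R_1}} X\cdot\tfrac{x}{r}\,dS ,
\end{equation*}
where the divergence theorem is applied on $\Omega=\Omega_{out}\setminus\overline{B_{R_1}}$ and the boundary term on the inner sphere is the same for $\Omega$ and for $\Omega_0$. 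This sidesteps the singularity of $f_1$ at the origin when $n\ge 2$. The same identity holds with equality on $\Omega_0$:
\begin{equation*}
\int_{\partial B_{R_2}} f_1^2\,dS=\int_{\Omega_0} G\,dV+\int_{\partial B_{R_1}} f_1^2\,dS .
\end{equation*}
Lipschitz regularity of $\partial\Omega_{out}$ suffices for the divergence theorem.

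It therefore remains to show that $\int_{\Omega} G\,dV \ge \int_{\Omega_0} G\,dV$. This is the reverse counterpart of Lemma \ref{lem:integral2}, now for an increasing radial function and with the equal-volume constraint $|\Omega|=|\Omega_0|$. The cleanest argument is the direct comparison. Split into the common part $\Omega\cap\Omega_0$ and the two pieces $\Omega\setminus B_{R_2}$ and $\Omega_0\setminus\Omega_{out}$, which have equal volume. On the first piece $r\ge R_2$, so $G\ge G(R_2)$; on the second, $r\le R_2$, so $G\le G(R_2)$. Thus
\begin{equation*}
\int_{\Omega\setminus B_{R_2}} G\,dV \ \ge\ G(R_2)\,|\Omega\setminus B_{R_2}| = G(R_2)\,|\Omega_0\setminus\Omega_{out}|\ \ge\ \int_{\Omega_0\setminus\Omega_{out}} G\,dV .
\end{equation*}
Adding the common part gives the claim, and combining the two divergence identities yields \eqref{inequality 5}.

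The main point needing care is the sign of $f_1$ and of $G$. Monotonicity of $G$ is what drives the volume comparison. Nonnegativity is not strictly needed there, but $f_1^2\ge f_1^2\,\frac{x}{r}\cdot\nu$ requires only $f_1^2\ge0$, which is automatic. So the only genuine input is that $G$ is increasing on $[R_1,\infty)$, which Lemma \ref{increasing decreasin lmm} provides. Note that no symmetry of $\Omega_{out}$ is used in this lemma.
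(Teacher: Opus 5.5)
Your proof is correct and is essentially the argument the paper defers to \cite{basak2026bounds}: the divergence theorem for the radial field $f_1^2\,x/r$ on $\Omega$ and on $\Omega_0$ (whose divergence is exactly $G$), the bound $X\cdot\nu\le f_1^2$ on $\partial\Omega_{out}$, cancellation of the common inner-sphere term, and the equal-volume comparison $\int_\Omega G\,dV\ge\int_{\Omega_0}G\,dV$, which follows from monotonicity of $G$. You are also right that only the monotonicity of $G$, not its sign, is used, and that monotonicity does hold: with $s=\frac{R_1^n(1-\beta R_1)}{(\beta R_1+n-1)\,r^n}$ one gets $G'(r)=2+(n-1)(s-1)^2+2(n-1)^2s^2>0$, so the slip in the $s^2$ coefficient of the paper's displayed $G'$ (which matters only for $n\ge 3$) does not affect your argument.
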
 

\begin{proof}
    Using the positivity and monotonicity of $G$, the desired inequality can be established by an argument similar to that in \cite{basak2026bounds}.
\end{proof}
We will use the following proposition from \cite{basak2023sharp} to conclude some integral identities related to the function $f_{1}(r)$. 
\begin{proposition} \label{prop:integral expression}
 Let $h : (0, \infty) \rightarrow \mathbb{R}$ be a smooth positive radial function of $r$. Let $W$  be a bounded domain in $\mathbb{R}^n$ with Lipschitz boundary $\partial W$.
 \begin{enumerate}
     \item If $W$ is symmetric of order $2$ with respect to the origin, then for each $i=1,2,\ldots n$, we have
     \begin{enumerate}
         \item $\displaystyle \int_{x \in W}   h(r) x_i\, dV = 0$,  \label{eqn:integral 1}~~~
       \item $\displaystyle \int_{x \in \partial W}   h(r) x_i\, dS = 0$.  \label{eqn: integral 3}
        \end{enumerate}
\item If $W$ is  symmetric of order $4$ with respect to the origin, then for each $i, j=1,2,\dots n$, $i\neq j$,  we have
\begin{enumerate}
     \item $\displaystyle \int_{ x \in W} h(r) x_i x_j \, dV = 0$, \label{eqn:integral 2}
      \item $\displaystyle \int_{x \in \partial W}  h(r) x_i x_j \,dS = 0$.  \label{eqn: integral 4}
        \end{enumerate}
 \end{enumerate}
 Here $(x_1, x_2, \ldots, x_n)$ represents Cartesian coordinates of a point $x \in \mathbb{R}^{n}$ with respect to the origin.
\end{proposition}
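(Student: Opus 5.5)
The proof uses a change of variables by the rotations that preserve $W$. Each such rotation is an orthogonal linear map $T$ of $\mathbb{R}^n$. It has Jacobian determinant $1$ and preserves $r=|x|$, so $h(r)$ is unchanged. Since $T(W)=W$ and $T$ is a homeomorphism, we also have $T(\partial W)=\partial W$. Because $T$ is an isometry, it maps the Lipschitz hypersurface $\partial W$ onto itself and preserves the $(n-1)$-dimensional Hausdorff measure $dS$. Hence for any integrable $\phi$,
\begin{equation*}
\int_W \phi(x)\,dV=\int_W \phi(Tx)\,dV, \qquad \int_{\partial W}\phi(x)\,dS=\int_{\partial W}\phi(Tx)\,dS .
\end{equation*}
The integrals are finite because $W$ is bounded, $h$ is smooth, and the coordinate functions are bounded on $\overline W$. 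Near the origin, if $0\in\overline W$, I will only use that $h$ is bounded on the relevant compact set, or else interpret the integrals as convergent. This is the one technical point I would flag, since $h$ is only defined on $(0,\infty)$.

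For part (1), fix $i$ and choose any $j\neq i$, which is possible because $n\ge 2$. Let $T=R_{i,j}^{\pi}$. This map sends $x_i\mapsto -x_i$ and $x_j\mapsto -x_j$, and fixes the other coordinates. Symmetry of order $2$ gives $T(W)=W$. Applying the identities above with $\phi=h(r)x_i$ yields $\int_W h(r)x_i\,dV=-\int_W h(r)x_i\,dV$, so the integral vanishes. The same computation on $\partial W$ gives (1)(b).

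For part (2), fix $i\neq j$ and let $T=R_{i,j}^{\pi/2}$. This map sends $(x_i,x_j)\mapsto(-x_j,x_i)$ and fixes the other coordinates. Then $x_ix_j\circ T=-x_jx_i$. Symmetry of order $4$ gives $T(W)=W$, so $\int_W h(r)x_ix_j\,dV=-\int_W h(r)x_ix_j\,dV=0$, and likewise on $\partial W$. Positivity of $h$ is not needed; the only delicate step is justifying the invariance of the surface measure on the Lipschitz boundary, which follows from the area formula for the isometry $T$.
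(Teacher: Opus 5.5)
Your proof is correct and uses the same symmetry argument the paper relies on; the paper itself gives no proof and defers the details to \cite{basak2023sharp}. The rotations $R_{i,j}^{\pi}$ and $R_{i,j}^{\pi/2}$ preserve $r$, $W$, $\partial W$, volume and surface measure while reversing the sign of $x_i$ (respectively $x_ix_j$), so each integral equals its own negative, and your integrability caveat near the origin is harmless in the paper's applications, where $\Omega$ and its boundary components stay away from the origin.
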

Let $E_1, E_2,\dots, E_n$ be the standard orthonormal basis of $\mathbb{R}^n$. Then for the eigenfunction $f_{1}$ given in \eqref{eigenfunction}, we get
\begin{align*} 
\Bigg \langle  \nabla \left( \frac{f_{1}(r)}{r}x_i \right), E_j \Bigg \rangle = \frac{\partial}{\partial x_j} \left( \frac{f_{1}(r)}{r}x_i \right) = 
\begin{cases}
   \frac{f_{1}'(r)}{r^2} x_j \ x_i- \frac{f_{1}(r)}{r^3} x_j \ x_i, &  \text{ for } j \neq i, \\[2mm]
    \frac{f_{1}'(r)}{r^2} x_i^2+ \frac{f_{1}(r)}{r^3}(r^2-x_i^2), &  \text{ for } j = i.
\end{cases}
\end{align*}
Consequently, we get,
\begin{align} \label{eq: inner product of gradient1}
\Bigg \langle  \nabla \left( \frac{f_{1}(r)}{r}x_i \right), \nabla \left( \frac{f_{1}(r)}{r}x_j \right) \Bigg \rangle =  
\begin{cases}
   \left( \frac{(f_{1}'(r))^2}{r^2} - \frac{(f_{1}(r))^2}{r^4}\right)x_ix_j, &  \text{ for } j \neq i, \\[2mm]
    \left( \frac{(f_{1}'(r))^2}{r^2}x_i^2 - \frac{f_{1}^2(r)}{r^4}x_i^2  + \frac{f_{1}^2(r)}{r^2}\right), &  \text{ for } j = i.\\
\end{cases}
\end{align}

\begin{align} \label{eq: inner product of gradient2}
 \mbox{ And, }\qquad \qquad\Bigg \langle \nabla\left(f_{1}(r)\right),\nabla \left(\frac{f_{1}(r)}{r}x_j\right)  \Bigg \rangle = \frac{\left(f_{1}'(r)\right)^2}{r}x_j.  \qquad \qquad \quad \quad \quad 
\end{align}
Using these expressions and Proposition \ref{prop:integral expression}, we conclude the following corollary.
\begin{cor} \label{cor: f_{1,1}}
    Let $\Omega$ and $f_{1}$ be defined as in the beginning of this section. Then for each $i,j=1,2,\dots n$, $i\neq j$ we have 
    \begin{enumerate}
        \item $\displaystyle \int_{x\in \partial \Omega}f_{1}(r) \frac{f_{1}(r)}{r}x_i\, dS = 0,$ \label{proof (i)}\\
         \item $\displaystyle \int_{x \in \partial \Omega}  \frac{f_{1}(r)}{r}x_i. \frac{f_{1}(r)}{r}x_j \,dS=0,$
        \item $\displaystyle \int_{ x \in \Omega} \left< \nabla f_{1}(r), \nabla \left(\frac{f_{1}(r)}{r}x_i \right)\right> dV=0,$ 
        \item $\displaystyle \int_{x \in \Omega} \left< \nabla \left(\frac{f_{1}(r)}{r} x_i \right) , \nabla \left(\frac{f_{1}(r)}{r}x_j \right)\right> dV=0$. 
    \end{enumerate}
\end{cor}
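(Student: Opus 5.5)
Each of the four identities follows by writing the integrand explicitly as (radial function)$\times x_i$ or (radial function)$\times x_ix_j$ with $i\neq j$, and then applying Proposition \ref{prop:integral expression} on the appropriate set. The relevant sets are $\Omega=\Omega_{out}\setminus\overline{B_{R_1}}$ for the volume integrals, and $\partial\Omega=\partial\Omega_{out}\cup\partial B_{R_1}$ for the boundary integrals. The symmetry input is simple. $\Omega_{out}$ is symmetric of order $4$ by hypothesis, and the ball $B_{R_1}$ centered at the origin is symmetric of every order. Symmetry of order $4$ implies symmetry of order $2$, since the rotation by $\pi$ in the plane $(x_i,x_j)$ is the square of the rotation by $\pi/2$. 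Hence Proposition \ref{prop:integral expression} applies on all of $\Omega_{out}$, $B_{R_1}$, $\partial\Omega_{out}$ and $\partial B_{R_1}$.

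For (1), the integrand is $\frac{f_1^2(r)}{r}x_i$, which has the form $h(r)x_i$ with $h=f_1^2/r$. We split $\int_{\partial\Omega}=\int_{\partial\Omega_{out}}+\int_{\partial B_{R_1}}$ and apply part 1(b) to each piece. For (2), the integrand is $\frac{f_1^2(r)}{r^2}x_ix_j$, and part 2(b) on each boundary component gives zero. For (3), we use \eqref{eq: inner product of gradient2}, which makes the integrand $\frac{(f_1')^2}{r}x_i$. We write $\int_\Omega=\int_{\Omega_{out}}-\int_{B_{R_1}}$, applying part 1(a) to both terms; on $B_{R_1}$ this is done formally, or more simply by observing directly that the reflection $x_i\mapsto -x_i$ (or the rotation by $\pi$) preserves the annular piece. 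For (4), \eqref{eq: inner product of gradient1} with $i\neq j$ gives the integrand $\bigl(\frac{(f_1')^2}{r^2}-\frac{f_1^2}{r^4}\bigr)x_ix_j$, and part 2(a) yields zero in the same way.

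The only technical point is that Proposition \ref{prop:integral expression} is stated for smooth \emph{positive} radial $h$ defined on $(0,\infty)$, while our $h$ may vanish or change sign and is only needed on $[R_1,\infty)$. The first option is to note that positivity plays no role in the proof: the rotation $R^{\pi}_{i,j}$ sends $x_i\mapsto -x_i$ and preserves $r$, $W$, $dV$ and $dS$, and $R^{\pi/2}_{i,j}$ sends $x_ix_j\mapsto -x_jx_i$. In each case the integral equals its own negative, for any integrable radial $h$. The second option is to avoid the issue altogether by applying the symmetry argument directly to $\Omega$ and $\partial\Omega$, which are themselves symmetric of order $4$. This bypasses both the subtraction of the inner ball and the extension of $h$ to $(0,R_1)$. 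I would present the second option, since it also removes any worry about $f_1$ being singular near the origin.
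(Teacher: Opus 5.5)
Your proposal is correct and follows the paper's route. You rewrite each integrand, via the explicit gradient inner-product formulas, as a radial factor times $x_i$ or $x_ix_j$ ($i\neq j$), and cancel it using the rotations behind Proposition~\ref{prop:integral expression}. Your added care is a sound tightening: positivity of $h$ plays no role, which matters because the factor $\frac{(f_1')^2}{r^2}-\frac{f_1^2}{r^4}$ in (4) need not be positive, and arguing directly on the rotation-invariant set $\Omega$ avoids subtracting $B_{R_1}$, where $f_1$ is singular. Drop the parenthetical reflection $x_i\mapsto -x_i$ and use the rotation by $\pi$ instead, since $\Omega_{out}$ is assumed invariant only under the rotations $R^{\pi/2}_{i,j}$.
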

\begin{lemma} \label{lem:integral1}
    Let $W \subset \mathbb{R}^n$ be an open, bounded domain with Lipschitz boundary having symmetry of order $4$ with respect to the origin. Let $\Phi(r)$ be a positive radial function on $\mathbb{R}^n$. Then, there exists a constant $A>0$ such that
    \begin{equation}
         \int_{W} \Phi(r) x_i^2\,\, dV= A\,\, \text{for all}\, \,i\in \{1,2 \dots n\}.
    \end{equation}
\end{lemma}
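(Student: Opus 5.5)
The plan is to use the rotation $R_{i,j}^{\pi/2}$ in the $(x_i,x_j)$ coordinate plane, which by the symmetry of order $4$ maps $W$ onto itself and interchanges $x_i$ and $x_j$ (up to sign). Fix $i\neq j$ and let $T=R_{i,j}^{\pi/2}$. This is a linear isometry fixing the origin. In the $(x_i,x_j)$ plane it sends $(x_i,x_j)\mapsto(-x_j,x_i)$ and leaves every other coordinate unchanged. Hence:
\begin{enumerate}
\item $|Tx|=|x|$, so $\Phi(r)$ is invariant under $T$;
\item $|\det T|=1$, so the change of variables $y=Tx$ preserves $dV$;
\item $T(W)=W$ by the symmetry assumption.
\end{enumerate}

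Next apply the change of variables $x=T^{-1}y$ (or directly $y=Tx$):
\begin{equation*}
\int_W \Phi(r)\,x_i^2\,dV=\int_{T(W)}\Phi(|T^{-1}y|)\,\bigl((T^{-1}y)_i\bigr)^2\,dV(y).
\end{equation*}
Since $T^{-1}$ is rotation by $-\pi/2$ in the same plane, $(T^{-1}y)_i=y_j$ up to sign, and the square removes the sign. Using $T(W)=W$ and the radial invariance, the right-hand side equals $\int_W\Phi(r)\,y_j^2\,dV$. Thus all the integrals $\int_W\Phi(r)x_i^2\,dV$, $i=1,\dots,n$, coincide; call the common value $A$.

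It remains to check that $A>0$. Because $W$ is a nonempty open set, the integrand $\Phi(r)x_1^2$ is positive off the hyperplane $\{x_1=0\}$, which has measure zero. So $A=\int_W\Phi x_1^2\,dV>0$. Finiteness comes from boundedness of $W$, provided $\Phi$ is locally integrable. In the applications $\Phi$ is built from $f_1$ on $\Omega$, which is away from the origin, so this is harmless; otherwise we assume $\Phi$ is continuous on $\overline W$ or otherwise integrable.

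The only delicate point is the bookkeeping of which coordinate maps to which under the rotation. The sign ambiguity disappears because of the square, so there is no real obstacle. As an alternative, one could sum over $i$ to get $nA=\int_W\Phi r^2\,dV$, but that is not needed.
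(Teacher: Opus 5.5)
Your proof is correct and follows the argument the paper relies on; the paper itself only refers to \emph{the cited reference} for the proof. The rotation $R_{i,j}^{\pi/2}$ preserves $W$, $r$ and $dV$ while turning $x_i^2$ into $x_j^2$, so the integrals coincide, and positivity of $\Phi$ on the nonempty open set $W$ gives $A>0$. Your caveat about integrability of $\Phi$ is apt, and the same change of variables with surface measure also covers the boundary versions of this identity that the paper uses in the proof of the main theorem.
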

For a detailed proof of Proposition \ref{prop:integral expression}, Corollary \ref{cor: f_{1,1}} and Lemma \ref{lem:integral1}, see (\cite{basak2023sharp}).

\begin{remark}
To prove Corollary~\ref{cor: f_{1,1}} and Lemma~\ref{lem:integral1}, we use `the symmetry of order~$4$' condition, which in particular yields an orthogonality condition of the test functions. Moreover, Corollary~\ref{cor: f_{1,1}} is used directly in the proof of Theorem~\ref{thm:isoperimetric}.
\end{remark}

\section{Proof of the main result}\label{main}
Now we provide proof of the main theorem.

  \noindent \textbf{Proof of Theorem \ref{thm:isoperimetric}:} 
        Since $\sigma_k(\Omega_0)=\sigma_2(\Omega_0), 2\leq k \leq n+1 $, it is enough to prove that $\sigma_{n+1}(\Omega) \leq \sigma_{n+}(\Omega_0)=\sigma_1(\Omega_0)$. Consider the following $(n+1)$ dimensional subspace of $H^1(\Omega)$,
        \begin{align*}
            E=span\{ f_{1}(r), \frac{f_{1}(r)}{r}x_1, \dots \frac{f_{1}(r)}{r}x_n \}, 
        \end{align*}
        where $f_{1}(r)$ is defined in (\ref{eigenfunction}) with $l=1$. For any $u \in E \backslash \{0\}$, there exist $c_0, c_1, \dots c_n \in \mathbb{R}$ not simultaneously equal to zero, such that 
        \begin{align*}
            u= c_0 f_{1}(r) + c_1 \frac{f_{1}(r)}{r}x_1 + \dots + c_n\frac{f_{1}(r)}{r}x_n.
        \end{align*}
        Then by using Corollary \ref{cor: f_{1,1}}, we get 
        \begin{align}
            \frac{\displaystyle \int_\Omega \| \nabla u \|^2 dV}{\displaystyle \int_{\partial {\Omega_{out}}} u^2 dS}= \frac{c_0^2 \displaystyle \int_\Omega \|\nabla f_{1}(r)\|^2 \, dV+ \displaystyle \sum_{i=1}^nc_i^2 \displaystyle \int_\Omega \bigg \| \nabla \left( \frac{f_{1}(r)}{r}  x_i \right) \bigg \|^2 \, dV}{c_0^2 \displaystyle \int_{\partial {\Omega_{out}}}f_{1}^2(r) \, dS + \displaystyle \sum_{i=1}^n c_i^2 \displaystyle \int_{\partial {\Omega_{out}}} \frac{f_{1}^2(r)}{r^2}x_i^2 \, dS}. \label{equality}
        \end{align}
        According to Lemma \ref{lem:integral1}, there are constants $A_1, A_2 > 0 $ such that for all natural numbers $ 1\leq i \leq n$,
         \begin{align*}
       \int_{\partial {\Omega_{out}}}\left( \frac{f_{1}(r)}{r}x_i \right)^2 dS & =\int_{\partial {\Omega_{out}}} \frac{f_{1}^2(r)}{r^2}x_i^2 dS = A_1,\\
       \int_\Omega \bigg \| \nabla \left( \frac{f_{1}(r)}{r} x_i \right) \bigg \|^2 dV & = \int_\Omega \left( \frac{(f'_{1}(r))^2}{r^2} x_i^2 - \frac{f_{1}^2(r)}{r^4}x_i^2 + \frac{f_{1}^2(r)}{r^2} \right) dV = A_2.
   \end{align*} 
   Therefore
   $$ n\,A_1 = \sum_{i=1}^n \int_{\partial {\Omega_{out}}} \left( \frac{f_{1}(r)}{r}x_i \right)^2 \, dS = \int_{\partial {\Omega_{out}}} f_{1}^2(r) \, dS, $$ and
 $$n\,A_2 = \sum_{i=1}^n  \int_\Omega \left(\frac{(f'_{1}(r))^2}{r^2} x_i^2-\frac{f_{1}^2(r)}{r^4}x_i^2 +\frac{f_{1}^2(r)}{r^2} \right) dV =\int_\Omega \left((f'_{1}(r))^2 + \frac{(n-1)}{r^2}f_{1}^2(r)\right)\, dV.$$
 Thus for all natural numbers $ 1\leq i\leq n,$ we have 
  \begin{equation} \label{sum 1} 
       \displaystyle \int_{\partial {\Omega_{out}}} \left( \frac{f_{1}(r)}{r}x_i \right)^2 \, dS  = A_1 = \frac{1}{n} \displaystyle \int_{\partial {\Omega_{out}}} f_{1}^2(r) \, dS.
        \end{equation}
         \begin{equation} \label{sum 1.5} 
        \displaystyle \int_\Omega \bigg \| \nabla \left( \frac{f_{1}(r) x_i}{r}\right) \bigg \|^2 \, dV  = A_2 = \frac{1}{n} \displaystyle \int_\Omega \left( (f'_{1}(r))^2 + \frac{(n-1)}{r^2} f_{1}^2(r)\right) \, dV.
         \end{equation}
         Now, from (\ref{equality}), \eqref{sum 1} and  \eqref{sum 1.5}, we get 
   \begin{align}
       \frac{\displaystyle \int_\Omega \| \nabla u \|^2 dV}{\displaystyle \int_{\partial {\Omega_{out}}} u^2 \, dS} = \frac{c_0^2 \displaystyle \int_\Omega \|\nabla f_{1}(r)\|^2 \, dV + A_2 \displaystyle \sum_{i=1}^n c_i^2}{c_0^2 \displaystyle \int_{\partial {\Omega_{out}}}f_{1}^2(r) \, dS + A_1 \displaystyle \sum_{i=1}^n c_i^2 }
        \leq \text{max} \Bigg \{\frac{\displaystyle \int_\Omega \|\nabla f_{1}(r)\|^2 \, dV}{\displaystyle \int_{\partial {\Omega_{out}}}f_{1}^2(r) \, dS}, \frac{A_2}{A_1} \Bigg \}. \label{gradient inequality 1}
   \end{align}
   Since 
\begin{align*}
    \frac{A_2}{A_1} & = \frac{\displaystyle \int_\Omega \left( (f'_{1}(r))^2 + \frac{(n-1)}{r^2} f_{1}^2(r)\right) dV}{\displaystyle \int_{\partial {\Omega_{out}}} f_{1}^2(r) \, dS} 
                     \geq \frac{\displaystyle \int_\Omega (f'_{1}(r))^2 \, dV}{\displaystyle \int_{\partial {\Omega_{out}}} f_{1}^2(r) \, dS} 
                     = \frac{\displaystyle \int_\Omega \|\nabla f_{1}(r)\|^2 \, dV}{\displaystyle \int_{\partial {\Omega_{out}}}f_{1}^2(r) \, dS}.
\end{align*}
   Then from the inequality (\ref{gradient inequality 1}) we get
   \begin{align}
       \frac{\displaystyle \int_\Omega \| \nabla u \|^2 dV}{\displaystyle \int_{\partial {\Omega_{out}}} u^2 \, dS} \leq  \frac{A_2}{A_1} =\frac{\displaystyle \int_\Omega \left( (f'_{1}(r))^2 + \frac{(n-1)}{r^2} f_{1}^2(r)\right) \, dV }{\displaystyle \int_{\partial {\Omega{out}}} f_{1}^2(r) \, dS}.\label{1st ratio} 
    \end{align} 
   Moreover, as the ball $B_{R_1}$ is symmetry of order $4$, it follows from Lemma \ref{lem:integral1} that
\begin{align}
\frac{\displaystyle \int_{\partial B_{R_1}} \beta u^2\,dS}
{\displaystyle \int_{\partial\Omega_{out}} u^2\,dS}
&=
\frac{\displaystyle \beta\left(
c_0^2\int_{\partial B_{R_1}} f_1^2(r)\,dS
+\sum_{i=1}^n c_i^2
\int_{\partial B_{R_1}}\frac{f_1^2(r)}{r}x_i^2\,dS
\right)}
{\displaystyle
c_0^2\int_{\partial\Omega_{\mathrm{out}}} f_1^2(r)\,dS
+\sum_{i=1}^n c_i^2
\int_{\partial\Omega_{\mathrm{out}}}\frac{f_1^2(r)}{r}x_i^2\,dS} \nonumber\\
&=
\frac{\displaystyle
\beta\left(c_0^2+\frac{1}{n}\sum_{i=1}^n c_i^2\right)
\int_{\partial B_{R_1}} f_1^2(r)\,dS}
{\displaystyle
\left(c_0^2+\frac{1}{n}\sum_{i=1}^n c_i^2\right)
\int_{\partial\Omega_{\mathrm{out}}} f_1^2(r)\,dS} \nonumber \\
&=
\frac{\beta \displaystyle\int_{\partial B_{R_1}} f_1^2(r)\,dS}
{\displaystyle\int_{\partial\Omega_{\mathrm{out}}} f_1^2(r)\,dS}.\label{2nd ratio}
\end{align}

Adding \eqref{1st ratio} and \eqref{2nd ratio}, we consequently obtain
   \begin{align}\label{gradient inequality}
       \frac{\displaystyle \int_\Omega \| \nabla u \|^2 dV +\beta \int_{\partial B_{R_1}}  u^2 \,dS}{\displaystyle \int_{\partial {\Omega_{out}}} u^2 \, dS} \leq \frac{\displaystyle \int_\Omega \left( (f'_{1}(r))^2 + \frac{(n-1)}{r^2} f_{1}^2(r)\right) \, dV +  \beta \int_{\partial {B_{R_1}}}  f_1^2(r)\, dS }{\displaystyle \int_{\partial {\Omega_{out}}} f_{1}^2(r) \, dS}. 
   \end{align} 
    Next, using the Lemma \ref{lem:integral2} and  \ref{integral6}, we get  
\begin{align*}
       \frac{ \displaystyle \int_\Omega \left( (f_{1}'(r))^2 + \frac{(n-1)}{r^2} f_{1}^2(r)\right) \, dV + \displaystyle \beta \int_{\partial {B_{R_1}}}  f_{1}^2(r) \, dS }{\displaystyle \int_{\partial {\Omega_{out}}} f_{1}^2(r) \, dS}\\ 
       \leq \frac{\displaystyle \int_{\Omega_{0}} \left( (f_{1}'(r))^2 + \frac{(n-1)}{r^2} f_{1}^2(r)\right ) \, dV + \displaystyle \beta\int_{\partial {B_{R_1}}}  f_{1}^2(r) \, dS }{\displaystyle \int_{\partial B_{R_2}}f_{1}^2(r) \, dS}.
\end{align*}
Therefore, from the variational characterization (\ref{variational}) and inequality (\ref{gradient inequality}), we conclude
   \begin{align*}
      \sigma_{n+1}(\Omega) \leq \max_{u(\neq 0) \in E} \frac{\displaystyle \int_\Omega \| \nabla u \|^2 \, dV + \beta \int_{{\partial B_{R_1}}} u^2 dS}{\displaystyle \int_{\partial {\Omega_{out}}} u^2 \, dS} \leq \sigma_2(\Omega_0).
   \end{align*}

   \begin{remark} For the function $f_0$ defined in \ref{f_0 eigenfunction}, the function
$
\hat G(r):=2f_0(r)f_0'(r)-\frac{n-1}{r}f_0^2(r)
$ is not monotone increasing in general.
Hence, under the imposed symmetry assumptions on domain $\Omega_{out}$, inequality \eqref{integral6} cannot be established for the function $f_0$ using our current approach. Therefore, the method presented above cannot be directly applied to obtain an isoperimetric bound for the first Steklov--Robin eigenvalue on $\Omega$.
   \end{remark}

\section{Counter examples Based on Numerical evidence}\label{counter}

In this section, we show that the symmetry of order $4$ assumption in Theorem~\ref{thm:isoperimetric} is essential and, in general, cannot be removed. 

\begin{figure}[h]
\centering
\subfloat[The concentric annulus $\Omega_0$]{
  \includegraphics[width=0.39\textwidth]{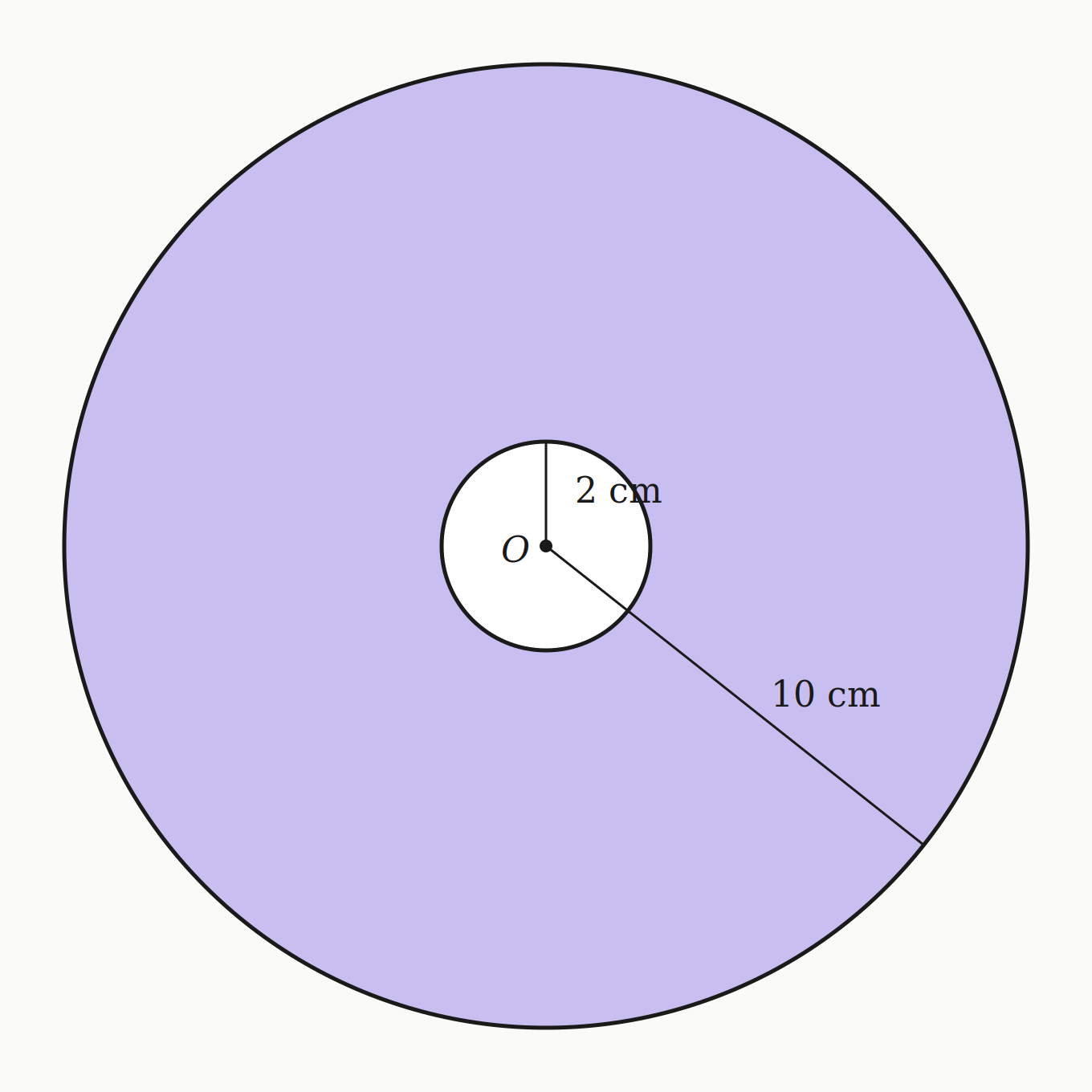}
}
\subfloat[$\Omega_2$ with symmetry of order $2$]{
  \includegraphics[width=0.30\textwidth]{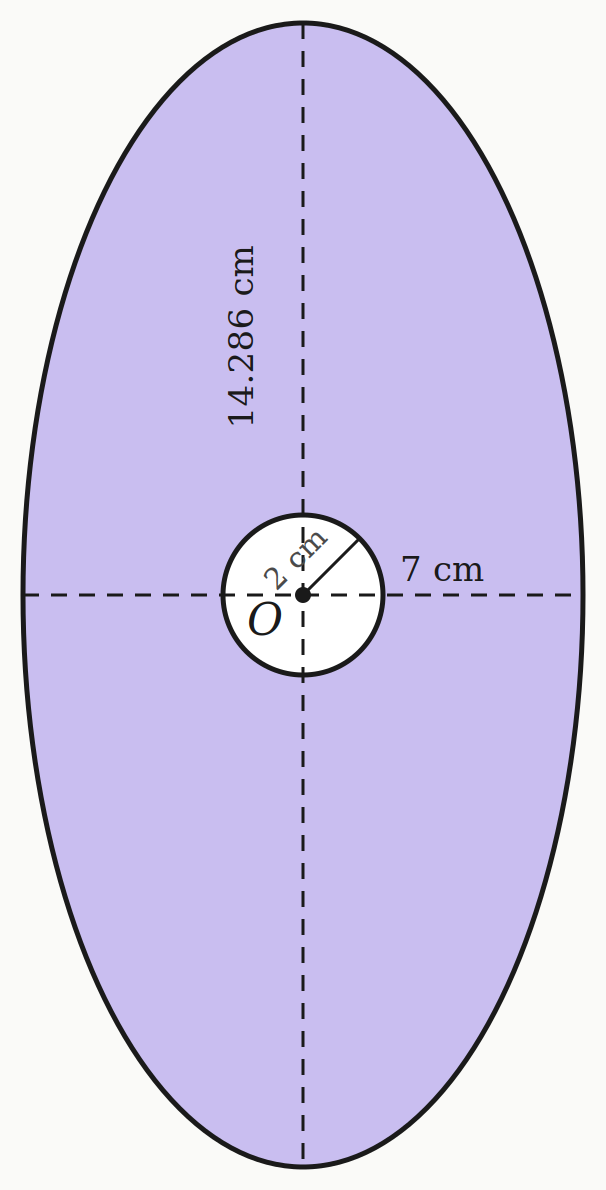}
}
\subfloat[$\Omega_3$ without any symmetry]{
  \includegraphics[width=0.30\textwidth]{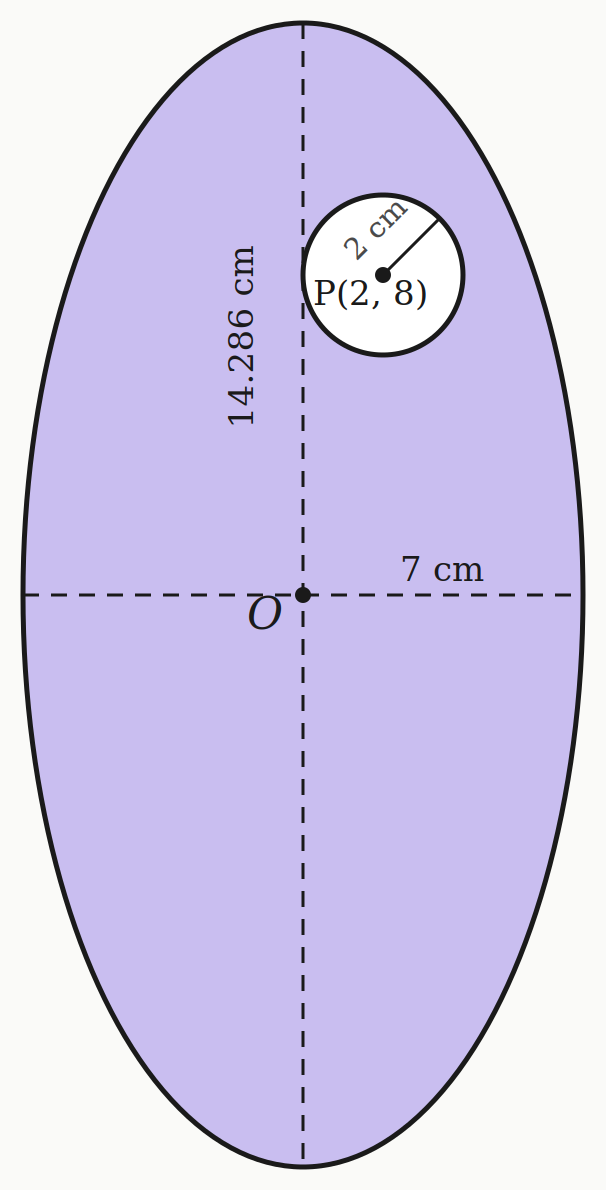}
}

\caption{Comparison of the eigenvalues $\sigma_2(\Omega_i)$ and $\sigma_2(\Omega_i)$ ($i=1,2$) with $\sigma_2(\Omega_0)=\sigma_3(\Omega_0)$, where $\Omega_0$ is the concentric annulus.}
\label{counterexamples}
\end{figure}
Specifically, we consider two doubly connected planar domains, denoted by $\Omega_1$ and  $\Omega_2$, all having the same area but lacking symmetry of order $4$. These domains are shown in Figure~\ref{counterexamples}.
Each domain is of the form
\[
\Omega_1=\tilde{E}\setminus\overline{B_2(O)} \quad \text{and} \quad \Omega_2=\tilde{E}\setminus \overline{B_2(P)},
\]
where $B_2(O)$ and $B_2(P)$ denote circular holes of radius $2$ cm centered at $O$ and $P$, respectively. Here, $\tilde{E}$ denotes the vertical ellipse with major and minor axes of lengths $14.286$ cm and $7$ cm, respectively. More precisely, $\Omega_1$ has symmetry of order $2$, while $\Omega_2$ has no symmetry.
Using \texttt{FreeFEM++}, for $\beta=1,$ we numerically compute the eigenvalues and obtain that
\[
\sigma_3(\Omega_1)>\sigma_3(\Omega_0)
\]
where $\Omega_0$ denotes the concentric annulus with the same area. Consequently, the conclusion of Theorem~\ref{thm:isoperimetric} fails for the $(n+1)$-th eigenvalue, even for domains possessing some symmetry but lacking symmetry of order $4$.\\
The domain $\Omega_2$, which has no symmetry, satisfies
\[
\sigma_2(\Omega_2)>\sigma_2(\Omega_0)
\quad\text{and}\quad
\sigma_3(\Omega_2)>\sigma_2(\Omega_0).
\]
Therefore, the conclusion of Theorem~\ref{thm:isoperimetric} fails for both $\sigma_2$ and $\sigma_3$ in the absence of the symmetry of order $4$ assumption.

The computed Steklov--Robin eigenvalues for the three domains, with $\beta=1$, are listed in Table~\ref{tab:mytable}.

\begin{table}[h]
\centering
\begin{adjustbox}{width=\textwidth}
\begin{tabular}{|c|c|c|c|c|}
\hline
Steklov--Dirichlet
Eigenvalue
& $\Omega_0$
& $\Omega_1$
& $\Omega_2$\\
\hline
Description
& Concentric annulus
& Concentric ellipse--ball
& Non-concentric ellipse--ball\\
\hline
$\sigma_2$
& 0.102707
& 0.054528
& 0.110097\\
\hline
$\sigma_3$
& 0.102707
& 0.161745
& 0.156683\\
\hline
\end{tabular}
\end{adjustbox}
\caption{Comparison of $\sigma_2(\Omega_0)=\sigma_3(\Omega_0)$ with $\sigma_2(\Omega_i)$ and $\sigma_3(\Omega_i)$, $i=1,2$.}
\label{tab:mytable}
\end{table}

\textbf{Acknowledgment :} S. Basak is supported by the University Grants Commission, India. We include proof of Lemma \ref{lem:integral2}, which was communicated to us by a reviewer during the review process of another paper \cite{basak2026bounds}, we thank the reviewer for providing this alternative argument. 

\bibliographystyle{plain}
\bibliography{ref}

@article{basak2023sharp,
  title={Sharp bounds for higher {S}teklov-{D}irichlet eigenvalues on domains with spherical holes},
  author={Basak, S. and Chorwadwala, A. and Verma, S.},
  journal={Canadian Mathematical Bulletin},
  pages={1--20},
  year={2023},
  publisher={Canadian Mathematical Society}
}

@article{polya1960two,
  title={Two more inequalities between physical and geometrical quantities},
  author={P{\'o}lya, G.},
  journal={J. Indian Math. Soc.(NS)},
  volume={24},
  number={1961},
  pages={413--419},
  year={1960}
}

@article{verma2020eigenvalue,
  title={On eigenvalue problems related to the {L}aplacian in a class of doubly connected domains},
  author={Verma, S. and Santhanam, G.},
  journal={Monatshefte f{\"u}r Mathematik},
  volume={193},
  number={4},
  pages={879--899},
  year={2020},
  publisher={Springer}
}

@article{ftouhi2022place,
  title={Where to place a spherical obstacle so as to maximize the first nonzero {S}teklov eigenvalue},
  author={Ftouhi, I.},
  journal={ESAIM: Control, Optimisation and Calculus of Variations},
  volume={28},
  pages={6},
  year={2022},
  publisher={EDP Sciences}
}

@article{gavitone2023isoperimetric,
  title={An isoperimetric inequality for the first {S}teklov--{D}irichlet {L}aplacian eigenvalue of convex sets with a spherical hole},
  author={Gavitone, N. and Paoli, G. and Piscitelli, G. and Sannipoli, R.},
  journal={Pacific Journal of Mathematics},
  volume={320},
  number={2},
  pages={241--259},
  year={2023},
  publisher={Mathematical Sciences Publishers}
}

@article{gavitone2023steklov,
  title={On a {S}teklov-{R}obin eigenvalue problem},
  author={Gavitone, N. and Sannipoli, R.},
  journal={Journal of Mathematical Analysis and Applications},
  volume={526},
  number={2},
  pages={127254},
  year={2023},
  publisher={Elsevier}
}

@article{basak2026bounds,
  title={Bounds for higher {S}teklov and mixed {S}teklov {N}eumann eigenvalues on domains with holes},
  author={Basak, S. and Verma, S.},
  journal={Annali di Matematica Pura ed Applicata (1923-)},
  pages={1--31},
  year={2026},
  publisher={Springer}
}

@article{anoop2020reverse,
  title={On reverse {F}aber-{K}rahn inequalities},
  author={Anoop, TV and Kumar, K A.},
  journal={Journal of Mathematical Analysis and Applications},
  volume={485},
  number={1},
  pages={123766},
  year={2020},
  publisher={Elsevier}
}

@article{anoop2021shape,
  title={A shape variation result via the geometry of eigenfunctions},
  author={Anoop, TV and Kumar, K A. and Kesavan, S.},
  journal={Journal of Differential Equations},
  volume={298},
  pages={430--462},
  year={2021},
  publisher={Elsevier}
}

@article{hong2020shape,
  title={Shape monotonicity of the first {S}teklov-{D}irichlet eigenvalue on eccentric annuli},
  author={Hong, J. and Lim, M. and Seo, D.},
  journal={arXiv preprint arXiv:2007.10147},
  year={2020}
}

@article{gavitone2024monotonicity,
  title={A monotonicity result for the first {S}teklov--{D}irichlet {L}aplacian eigenvalue},
  author={Gavitone, N. and Piscitelli, G.},
  journal={Revista Matem{\'a}tica Complutense},
  volume={37},
  number={2},
  pages={509--523},
  year={2024},
  publisher={Springer}
}

@article{GloriaPaoli2021C,
title = {A stability result for the {S}teklov {L}aplacian Eigenvalue Problem with a spherical obstacle},
journal = {Communications on Pure and Applied Analysis},
volume = {20},
number = {1},
pages = {145-158},
year = {2021},
issn = {1534-0392},
doi = {10.3934/cpaa.2020261},
url = {https://www.aimsciences.org/article/id/016bca28-5f0f-4cae-aa66-49a901b67ecb},
author = {Paoli, G. and Piscitelli, G. and Sannipoli, R.}
}

@article{sannipoli2025estimates,
  title={Estimates for the First and Second {S}teklov--{D}irichlet Eigenvalues: R. Sannipoli},
  author={Sannipoli, R.},
  journal={Milan Journal of Mathematics},
  volume={93},
  number={2},
  pages={435--453},
  year={2025},
  publisher={Springer}
}

@article{basak2026sharp,
  title={Sharp bounds and geometric properties of the first non trivial {S}teklov {N}eumann Eigenvalue},
  author={Basak, S. and Paoli, G. and Sannipoli, R. and Verma, S.},
  journal={arXiv preprint arXiv:2603.25448},
  year={2026}
}

@article{banuelos2010eigenvalue,
  title={Eigenvalue inequalities for mixed {S}teklov problems},
  author={Banuelos, R. and Kulczycki, T. and Polterovich, I. and Siudeja, B.},
  journal={Operator theory and its applications},
  volume={231},
  pages={19--34},
  year={2010},
}

@article{hassannezhad2020eigenvalue,

  title={Eigenvalue bounds of mixed {S}teklov problems},
  author={Hassannezhad, A. and Laptev, A.},
  journal={Communications in Contemporary Mathematics},
  volume={22},
  number={02},
  pages={1950008},
  year={2020},
  publisher={World Scientific}
}

@article{arias2024applications,
  title={Applications of possibly hidden symmetry to {S}teklov and mixed {S}teklov problems on surfaces},
  author={Arias-Marco, T. and Dryden, E. B and Gordon, C. S and Hassannezhad, A. and Ray, A. and Stanhope, E.},
  journal={Journal of Mathematical Analysis and Applications},
  volume={534},
  number={2},
  pages={128088},
  year={2024},
  publisher={Elsevier}
}

@article{CITO2026114086,
title = {Optimality and stability of the radial shapes for the Sobolev trace constant},
journal = {Nonlinear Analysis},
volume = {269},
pages = {114086},
year = {2026},
issn = {0362-546X},
doi = {https://doi.org/10.1016/j.na.2026.114086},
url = {https://www.sciencedirect.com/science/article/pii/S0362546X26000325},
author = { Cito,S.}
}

@article{cito2025stability,
  title={A stability result for the first {R}obin--{N}eumann eigenvalue: A double perturbation approach},
  author={Cito, S. and Paoli, G. and Piscitelli, G.},
  journal={Communications in Contemporary Mathematics},
  volume={27},
  number={06},
  pages={2450039},
  year={2025},
  publisher={World Scientific}
}

@article{paoli2020sharp,
  title={Sharp estimates for the first p-{L}aplacian eigenvalue and for the p-torsional rigidity on convex sets with holes},
  author={Paoli, G. and Piscitelli, G. and Trani, L.},
  journal={ESAIM: Control, Optimisation and Calculus of Variations},
  volume={26},
  pages={111},
  year={2020},
  publisher={EDP Sciences}
}
\end{document}